\renewcommand{\dim}{\operatorname{dim}}
\newcommand{\Aut}{\operatorname{Aut}}
\newcommand{\Z}{{\mathbb Z}}
\newcommand{\M}{{\mathcal M}}
\newcommand{\Q}{{\mathbb Q}}
\newcommand{\C}{{\mathbb C}}
\newcommand{\E}{{\mathcal E}}
\newcommand{\Or}{{\operatorname O}}
\newcommand{\di}{{\operatorname d}}
\newtheorem{theorem}{Theorem}[section]
\newtheorem{lemma}[theorem]{Lemma}
\newtheorem{corollary}[theorem]{Corollary}
\newtheorem{proposition}[theorem]{Proposition}
\newtheorem{remark}[theorem]{Remark}
\newtheorem{example}[theorem]{Example}
\newtheorem{definition}[theorem]{Definition}
\newtheorem{question}[theorem]{Question}
\title{{On the realizability of group actions}}
\author{Cristina ~Costoya}
\address[C.~Costoya]{Departamento de Computaci\'on, \'Alxebra,
Universidade da Coru{\~n}a, Campus de Elvi{\~n}a, 15071  A Coru{\~n}a, Spain.}
\email[C.~Costoya]{cristina.costoya@udc.es}
\author{Antonio ~Viruel}
\address[A.~Viruel]{
Departamento de {\'A}lgebra, Geometr{\'\i}a y Topolog{\'\i}a,
Universidad de M{\'a}\-la\-ga, Campus de Teatinos, 29071 M{\'a}laga,
Spain.}
\email[A.~Viruel]{viruel@uma.es}
\thanks{First author is partially  supported by Ministerio de Econom\'ia y Competitividad grant 
MTM2013-43687-P (European FEDER support included). Second author is partially supported by Ministerio de Econom\'ia y Competitividad grant 
MTM2013-41768-P (European FEDER support included). Both authors are partially supported by Xunta de Galicia grant EM 2013/016.}
\begin{document}

\begin{abstract}

We raise the question of realizability of group actions which is an extended version of the 1960's Kahn  realizability problem for (abstract) groups. Namely, if $M$ is a  $\mathbb ZG$-module for a group $G$, we say that  a simply-connected space $X$ realize this action if,  for some $k$,  $\pi_k(X)$ as a $\mathbb Z \mathcal E (X) $-module for the group $\mathcal E (X)$ of self-homotopy equivalences of $X$,  is isomorphic to  $M$ as a  $\mathbb ZG$-module. Which modules can be so realized? In this paper we obtain a positive answer for any faithful finitely generated $\Q G$-module, where $G$ is finite. Our proof relies on providing a positive answer to Kahn's problem for a large class of orthogonal groups of which,  by using invariant theory, our case is shown to be a particular one.

\end{abstract}

\maketitle

\section{Introduction}\label{intro}

Realizability problems in algebraic topology are very easy to state and extremely difficult to solve.  Classical examples of this kind are
the \emph{realizability of cohomological algebras} settled by N. E. Steenrod in the 1960's, which asks for characterization of graded algebras that appear as the cohomology algebra of a space, \cite{Aguade}, \cite{KasperJesper}; the \emph{$G$-Moore space problem},  also by Steenrod, which asks for  characterization of $\Z G$-modules that appear as the homology of some simply-connected $G$-Moore space $X$, \cite[Problem 51]{Lashof}, \cite{GC};  the \emph{realizability problem for abstract groups} proposed by D. Kahn, which asks for characterization of groups that appear as the group of self-homotopy equivalences of a simply-connected spaces, \cite{ka1}, \cite{CV2}.
They all ask which algebraic structures occur as a given homotopy-type of a space.

In this paper we present a question which is an extension, and  an homotopic dual, respectively, of the Kahn realizability problem and the {$G$-Moore space problem}  presented above.  To be more precise, we say that a $\Z G$-module $M$ is realized by a simply-connected space $X$ if it occurs as the $\Z \E (X)$-module $\pi_k(X)$ for some $k$, where $\E (X)$  is the group of self-homotopy equivalences with its natural action (by composition) on the homotopy groups $\pi_k(X)$, for every $k\geq 2$.

Then, one might ask the following question:
\begin{question}[Realizability of group actions]\label{question} Which finetely generated  $\Z G$-modules can be realized by simply-connected spaces $X$?
\end{question}

Restrictions on this question are studied here, mainly because realizing a group action implies in particular to realize (in the Kahn sense) the group  itself, which is still an open problem in the general case.  Key progress was made though in a recent  work \cite{CV2}, where we proved that every finite group  is the group of self homotopy equivalences of a simply-connected rational space.  The use of rational homotopy theory techniques and the ingenious result of Frucht \cite{Frucht1} that expresses every finite group as the automorphism group of a finite graph, allowed us to realize any finite group through a minimal Sullivan algebra of finite type (a rational space) encoding the \emph{finite} set of vertices and edges of a graph.  Here, we exploit the same techniques of rational homotopy theory, though we follow a significantly different approach that will be carried out in Section \ref{invariant}. Namely, if the action is faithful over a finitely generated $\Q G$-module $M$, and $G$ is finite, we make a connection with Invariant Theory to represent $G \leq {\rm{GL}} (M)$ as $\Or (\mathcal Q) \leq {\rm{GL}} (M)$, the inclusion of the orthogonal group of a family $\mathcal Q$ of algebraic forms over $M$  in the general linear group, Theorem \ref{thm:orthogonal}. Then, in Section \ref{Model}, we show that a large class of orthogonal groups $\Or  (\mathcal Q) $ can be realized in the Kahn sense by a minimal Sullivan algebra encoding the algebraic forms, Theorem  \ref{thmgroup} and Corollary \ref{diagonal}.  The general result that we prove here and that we obtain  as an application of Theorem  \ref{thmgroup} and Theorem \ref{thm:orthogonal} above mentioned is as follows.

 \begin{theorem}\label{main} Every faithful and finitely generated $\mathbb QG$-module $M$ where $G$ is a finite group,  can be realized by infinitely many (non-homotopy equivalent) rational spaces $X$.
\end{theorem}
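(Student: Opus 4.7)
The plan is to string together the two ingredients already advertised in the introduction. Starting from the faithful representation $G \hookrightarrow \mathrm{GL}(M)$, Theorem \ref{thm:orthogonal} produces a finite family $\mathcal Q$ of algebraic forms on $M$ with $\Or(\mathcal Q) = G$ as subgroups of $\mathrm{GL}(M)$. I would then feed the pair $(M,\mathcal Q)$ into Theorem \ref{thmgroup} (or Corollary \ref{diagonal}, whichever variant covers this case) to build a minimal Sullivan algebra $(\Lambda V, d)$ whose group of self-equivalences is $\Or(\mathcal Q)=G$. I expect the construction to be essentially tautological: $V$ contains, in some even degree $2n$, a copy of $M^\ast$ on which $G$ acts by the contragredient representation, together with odd-degree generators whose differentials encode the forms of $\mathcal Q$, plus, if necessary, some auxiliary ``rigidifying'' generators à la \cite{CV2} whose only role is to kill the unwanted self-equivalences.

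Let $X$ be a rational space with this minimal model. The standard dictionary of rational homotopy theory gives, for every $k\geq 2$,
\[
\pi_k(X)\otimes \mathbb Q \cong (V^k)^\ast
\]
as $\mathbb Q\mathcal E(X)$-modules. Taking $k=2n$ yields $\pi_{2n}(X)\otimes \mathbb Q \cong (M^\ast)^\ast \cong M$ as $\mathbb Q G$-modules, since the contragredient representation applied twice agrees with the original under the canonical $G$-equivariant evaluation isomorphism $M \cong M^{\ast\ast}$. Hence $X$ realises $M$ in the sense of Question \ref{question}.

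For the ``infinitely many'' clause, I would enlarge the minimal model above by adjoining, for each integer $N\geq 1$, extra closed generators in carefully chosen high odd degrees on which $G$ is stipulated to act trivially, with vanishing differential and no new syzygies. A degree count analogous to that in \cite{CV2} should ensure that these adjunctions neither introduce new self-equivalences nor collapse to previously constructed models, thus furnishing an infinite family of pairwise non-homotopy-equivalent rational spaces all realising $M$. The main technical obstacle I foresee is precisely this bookkeeping: one has to guarantee that the rigidifying generators demanded by Theorem \ref{thmgroup} lie in degrees disjoint from $2n$ and do not couple, via the differential, with the copy of $M^\ast$ sitting in $V^{2n}$, so that the identification $\pi_{2n}(X)\otimes\mathbb Q\cong M$ is preserved and the enlargements remain genuinely distinct rational homotopy types. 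Once this degree bookkeeping is arranged, the conclusion follows from the naturality of Sullivan's realisation functor.
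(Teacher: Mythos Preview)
Your first three steps are exactly the paper's argument: combine Theorem~\ref{thm:orthogonal} with Theorem~\ref{thmgroup} (equivalently Corollary~\ref{diagonal}) and read off $\pi_{40}(X)\cong (M^\ast)^\ast\cong M$ as $\Q G$-modules from the Sullivan dictionary. The generators $v_j$ sit alone in degree~$40$ of $V$, so the degree bookkeeping you worry about is already handled by Definition~\ref{defmodel}.

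The genuine gap is in your treatment of the ``infinitely many'' clause. Adjoining a closed generator $w$ in some odd degree with $\di w=0$ and no further relations \emph{always} creates new self-equivalences: the endomorphism fixing every old generator and sending $w\mapsto \lambda w$ is an automorphism of the enlarged Sullivan algebra for every $\lambda\in\Q^\ast$. Hence $\E$ of the enlarged model contains a copy of $\Q^\ast$ and is no longer the finite group $G$, so the enlarged space no longer realises $M$. No amount of degree bookkeeping cures this; the point is that a free closed generator contributes a $\mathrm{GL}_1$ factor to the automorphism group. (If instead you tied $w$ into the differential to kill these scalings, you would have to redo the delicate analysis of Theorem~\ref{thmgroup} from scratch.)

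The paper sidesteps this entirely: the model $\M_{(\mathcal Q,k)}$ of Definition~\ref{defmodel} already depends on an integer parameter $k$ subject only to $\deg(q_{r+1})<2k+(d-1)$, and Theorem~\ref{thmgroup} holds for every such $k$. Since $|z|=80k+40d-41$ varies with $k$, distinct values of $k$ give minimal Sullivan algebras with generators in different degrees, hence non-homotopy-equivalent rational spaces, all with $\E\cong G$ and $\pi_{40}\cong M$. This is the content of Corollary~\ref{diagonal}, and it gives the infinite family for free.
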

In the previous theorem,  spaces $X$ are Postnikov pieces, or in other words, $G$-objects in the homotopy category of topological spaces and,  the module $M$ appears as $\pi_{40} (X)$ whereas the $G$-action on $\pi_{\ne 40} (X)$ is trivial.  In that sense, Question \ref{question} is not only a generalization of Kahn's problem, but an homotopic dual of the $G$-Moore problem aforementioned. Remark that this is the best one can do as a dual,  since if  $X$ was an Eilenberg-MacLane space, namely $X=K(M,n)$,  then $G$ would be isomorphic to $\Aut (M)$ which is an infinite group for $M\not=\{0\}$ and would contradict our hypothesis.

\medskip
The present work represents a further significant progress with respect to \cite{CV2} since the class of orthogonal groups for which Theorem \ref{thmgroup}  is applied to, strictly contains finite groups.  This latter approach leads to an alternate  way of tackling Kahn's realizability problem. Section \ref{sect:garibaldi}, devoted to that end, should be thought of as a first attempt to give a complete answer to the question of whether any group is the group of self homotopy equivalences of a rational space.

\medskip
Henceforth  $M$ denotes an $n$-dimensional $\Q$-module.
\section{Finite groups as orthogonal groups of algebraic forms}\label{invariant}

We use in this section Invariant Theory and we recall some pertinent facts, referring to  \cite{benson} for more information.
The ring of polynomial functions on $M$ is denoted by $ \Q [M]$,  which  by fixing a basis
$\{{v_1}, \ldots, {v_n}\}$ of  the dual $M^\ast$, can be expressed as $ \mathbb Q [{v_1}, \ldots, {v_n}]$.   An \emph{$n$-algebraic form of degree $d$} is an homogeneous polynomial function $ q (v_1, . . . ,v_n)$ of degree $d$.
We say that $f \in {\rm{GL}} (M)$ is an \emph{automorphism of the $n$-algebraic form $q$} if  $q \circ f = q$.  The set of all the automorphisms of the form $q$ is a group which we call \emph{the orthogonal group of $q$} and we denote it by $\Or (q)$. In an analogous way,  we define the \emph{orthogonal group of a family of $n$-algebraic forms} $\mathcal Q = \{q_0, \ldots, q_r\}$  by $ \Or (\mathcal Q) = \{ f \in {\rm{GL}} (M)  \mid q_i \circ f = q_i \; \text{for every}\;  i = 0, \ldots, r\}$.

If a group $G$ acts on $M$, the action of  $G$ over $M$ extends to an action  over $\Q[M]$ via $(g \cdot p) (u) = p (g^{-1} u)$ for $g \in G$, $ p \in \Q[M]$. The invariant ring  $\Q[M]^G$ is the set of all  the fixed points of this action which is the basic object of study of Invariant Theory.  A classical result, due to D. Hilbert and E. Noether, which is central to this paper, states that if $G$ is a finite group then the invariant ring is finitely generated; see \cite[Corollary 1.3.2]{benson}.
One can also look at the action of $G$ over the field of fractions $\Q (M)$  given by $g \cdot (f_1 / f_2)  = (g \cdot f_1)/ (g \cdot f_2)$. In this case, $\Q (M)$ is a Galois extension of  the fixed points of this action, $\Q (M)^G$, with Galois group $G$, and $\Q(M)^G$ is the field of fractions of the invariant ring; see \cite[Proposition 1.1.1]{benson}.

\medskip

Before proving a technical lemma, we introduce some notation.

\begin{definition}\label{def:prerealizable}
A family of $n$-algebraic forms $\mathcal Q =  \{q_0, \cdots,  q_{r+1}\} \subset \Q[v_1, \ldots, v_n]$  is realizable if:

\begin{enumerate}

\item\label{lem:cambiar_formas:2} $q_{r+1}=q_0^s$ for some $s\geq {\rm{max}}  \{n, \lceil {\frac{\deg(q_r)}{\deg(q_0)}}\rceil +1 \} $
\item\label{lem:cambiar_formas:3} $\deg(q_{i+1}) - \deg(q_{i})>1$, for $i=0,\ldots,r$.
\item\label{lem:cambiar_formas:1} $q_0=\lambda_1v_1^d + \ldots + \lambda_nv_n^d$,  for  some integer $d > 1$, $\lambda_i \neq 0$ for $i = 1, \ldots, n$.
\end{enumerate}
\vspace{0.25cm}
We say that $\mathcal Q$ is pre-realizable if $\mathcal Q$ satisfies (\ref{lem:cambiar_formas:2})  and (\ref{lem:cambiar_formas:3}).

\end{definition}

 Now, recall that the ring of polynomial functions is a  unique factorization domain. Then, we  can prove the following lemma.
\begin{lemma}\label{lem:cambiar_formas}
Let $\mathcal P = \{p_0, \ldots, p_r\}$ be an arbitrary family of $n$-algebraic forms on $M$. Then, there exists a pre-realizable family $\mathcal Q = \{q_0, \ldots, q_{r+1}\}$  of $n$-algebraic forms on $M$ such that $\Or(\mathcal P)=\Or(\mathcal Q)$ with $q_0=p_0$.

\end{lemma}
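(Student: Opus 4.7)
The strategy is to modify the family $\mathcal P$ by multiplying each $p_i$ (for $i \geq 1$) by a suitably high power of $p_0$ and then adjoining a high power of $p_0$ at the end. The key observation, already flagged in the paragraph preceding the lemma, is that since $\Q[v_1,\ldots,v_n]$ is a unique factorization domain, such multiplications leave the orthogonal group unchanged, provided $p_0$ itself is preserved by the action.

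Concretely, I would set $q_0 := p_0$ and $d_0 := \deg(p_0)$, then inductively define
\[ q_i := p_i \cdot p_0^{a_i}, \qquad i = 1, \ldots, r, \]
choosing each $a_i \in \Z_{\geq 0}$ large enough so that $\deg(q_i) = \deg(p_i) + a_i d_0 > \deg(q_{i-1}) + 1$; this is always achievable since $d_0 \geq 1$. Finally I would put $q_{r+1} := p_0^s$ for an integer $s$ taken large enough to satisfy both $s \geq \max\{n, \lceil \deg(q_r)/d_0 \rceil + 1\}$ and $s d_0 - \deg(q_r) > 1$. Conditions (\ref{lem:cambiar_formas:2}) and (\ref{lem:cambiar_formas:3}) of Definition \ref{def:prerealizable} hold by construction, and $q_0 = p_0$ by design, so $\mathcal Q = \{q_0, \ldots, q_{r+1}\}$ is pre-realizable.

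It remains to verify that $\Or(\mathcal P) = \Or(\mathcal Q)$. The inclusion $\Or(\mathcal P) \subseteq \Or(\mathcal Q)$ is immediate, since every element of $\mathcal Q$ is a product of powers of elements of $\mathcal P$. For the converse, let $f \in \Or(\mathcal Q)$. Then $f \cdot p_0 = f \cdot q_0 = q_0 = p_0$, which in turn gives $f \cdot p_0^{a_i} = p_0^{a_i}$ for every $i$; rewriting $f \cdot q_i = q_i$ as $(f \cdot p_i)\,p_0^{a_i} = p_i\,p_0^{a_i}$ in $\Q[v_1,\ldots,v_n]$ and cancelling the nonzero factor $p_0^{a_i}$ in this UFD yields $f \cdot p_i = p_i$ for every $i$, so $f \in \Or(\mathcal P)$.

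I do not anticipate a substantive obstacle: the argument is essentially formal, combining careful bookkeeping of degrees with the standard cancellation property in a UFD. The only point that warrants a small amount of care is choosing $s$ slightly larger than the bound in condition (\ref{lem:cambiar_formas:2}) when $d_0 = 1$, so that the gap condition (\ref{lem:cambiar_formas:3}) also holds strictly at the last step.
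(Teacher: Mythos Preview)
Your proposal is correct and follows essentially the same approach as the paper: build $\mathcal Q$ by multiplying each $p_i$ by invariant factors drawn from $p_0$, then use cancellation in the UFD $\Q[M]$ to recover $\Or(\mathcal P)$ from $\Or(\mathcal Q)$. The only cosmetic difference is that the paper defines $q_i$ recursively as $q_i = p_i\,q_{i-1}\,q_0$ rather than $q_i = p_i\,p_0^{a_i}$, but the underlying idea and the verification of $\Or(\mathcal P)=\Or(\mathcal Q)$ are identical.
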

\begin{proof}
Let  $q_0$ equal $p_0$. Inductively, we define $q_i=p_i q_{i-1}  q_0$ for $i=1,\ldots,r$ and $q_{r+1}=q_0^s$ for $s \geq  {\rm{max}}  \{n, \lceil {\frac{\deg(q_r)}{\deg(q_0)}}\rceil +1 \}$. It is immediate that \emph{\eqref{lem:cambiar_formas:2}}  and \eqref{lem:cambiar_formas:3} from Definition \ref{def:prerealizable} hold for the family $\mathcal Q = \{q_0, \cdots, q_{r+1}\}$, so it only remains to prove that the orthogonal groups coincide.
 Now, by induction,  if $g\in \Or(\mathcal P)$, then $p_i  \circ g =p_i$ for all $i$, and therefore
\begin{align*}
q_i \circ g&=(p_iq_{i-1} q_0) \circ g =( p_i \circ g ) (q_{i-1} \circ g ) (q_0 \circ g)= p_iq_{i-1} q_0\\&=q_i,\text{ for } i=1,\ldots,r,\text{ and} \\
q_{r+1} \circ g &=q_0^s \circ g = (q_0 \circ g)^s=q_0^s\\&=q_{r+1},
\end{align*}
hence $g\in \Or(\mathcal Q)$. On the other hand, if $g\in \Or(\mathcal Q)$ then
\begin{align*}
p_iq_{i-1} q_0&=q_i\\
&=q_i \circ g\\
&=(p_iq_{i-1} q_0    )\circ g=(p_i \circ g) (q_{i-1} \circ g) (q_0 \circ g)  ,\text{ for } i=1,\ldots,r,\\
&=(p_i \circ g)q_{i-1}q_0,\text{ for } i=1,\ldots,r.
\end{align*}
Since $\Q[M]$ a unique factorization domain, then $p_i=p_i \circ g$, so $g\in \Or(\mathcal P)$.
\end{proof}

With these results in mind, we can prove the following result.
\begin{theorem}\label{thm:orthogonal}  Let $M$ be a faithful $n$-dimensional $\Q G$-module where $G$ is a finite group. Then, there exists a family $\mathcal Q = \{q_0,  \ldots, q_{r+1}\}$ of realizable $n$-algebraic forms over $M$ such that $\deg(q_0)=2$ and
 $G $ is the orthogonal group  $\Or(\mathcal Q) \leq {\rm{GL}} (M)$. \end{theorem}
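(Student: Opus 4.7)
The plan is to construct $q_0$ with the required diagonal quadratic shape via invariant averaging and Gram--Schmidt diagonalization, adjoin a finite generating set of the invariant ring $\Q[M]^G$ so that the Galois correspondence forces the joint orthogonal group to coincide with $G$, and finally apply Lemma~\ref{lem:cambiar_formas} to upgrade the resulting family to a realizable one with $q_0$ as its distinguished first entry.

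For the first step, I would fix any positive-definite symmetric bilinear form $B_0$ on $M$ over $\Q$ (for instance the standard dot product in some basis of $M$) and average it: set $B(x,y):=\sum_{g\in G} B_0(gx, gy)$. Then $B$ is $G$-invariant and remains positive-definite, hence non-degenerate. Applying Gram--Schmidt over $\Q$, I pick a $B$-orthogonal basis $\{e_1,\dots,e_n\}$ of $M$ with $B(e_i,e_i)=\lambda_i\in\Q^{\times}$; in the dual basis $\{v_1,\dots,v_n\}$ of $M^{\ast}$ the associated quadratic form
\[
q_0 = \lambda_1 v_1^2 + \cdots + \lambda_n v_n^2 \in \Q[v_1,\ldots,v_n]
\]
has degree $2$, is $G$-invariant, and satisfies condition~\eqref{lem:cambiar_formas:1} of Definition~\ref{def:prerealizable} with $d=2$.

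Next, by the Hilbert--Noether theorem cited in the excerpt I choose homogeneous generators $f_1,\ldots,f_m$ of $\Q[M]^G$ as a $\Q$-algebra and consider the family $\mathcal P:=\{q_0,f_1,\ldots,f_m\}$. The inclusion $G\subseteq\Or(\mathcal P)$ is clear from $G$-invariance. For the reverse, any $f\in\Or(\mathcal P)$ acts as a $\Q$-algebra automorphism of $\Q[M]$, hence of $\Q(M)$, fixing each $f_i$ and therefore the entire field of fractions $\Q(M)^G$ of $\Q[M]^G$; since $M$ is faithful, the extension $\Q(M)/\Q(M)^G$ is Galois with group $G$, so $f\in G$, giving $\Or(\mathcal P)=G$ as subgroups of ${\rm{GL}}(M)$.

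To conclude, I apply Lemma~\ref{lem:cambiar_formas} to $\mathcal P$, ordered so that $p_0:=q_0$, to produce a pre-realizable family $\mathcal Q=\{q_0,q_1,\ldots,q_{r+1}\}$ with $\Or(\mathcal Q)=\Or(\mathcal P)=G$ and with first entry still equal to our $q_0$. Because $q_0$ retains the diagonal shape required by~\eqref{lem:cambiar_formas:1} of Definition~\ref{def:prerealizable}, the family $\mathcal Q$ is in fact realizable with $\deg(q_0)=2$, exactly as the theorem asserts. The step that will demand the most care is the identification $\Or(\mathcal P)=G$: the Galois-theoretic correspondence, together with faithfulness of the $G$-action, is what rules out the possibility of a larger overgroup of $G$ in ${\rm{GL}}(M)$ also fixing the invariant generators.
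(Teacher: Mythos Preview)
Your proposal is correct and follows essentially the same route as the paper: average a positive-definite form to obtain a $G$-invariant quadratic $q_0$ and diagonalize it, adjoin a finite set of homogeneous generators of $\Q[M]^G$ (Hilbert--Noether), use the Galois correspondence for $\Q(M)/\Q(M)^G$ to force $\Or(\mathcal P)=G$, and then apply Lemma~\ref{lem:cambiar_formas} with $p_0=q_0$ to upgrade to a realizable family. The only cosmetic differences are that you average the bilinear form and then Gram--Schmidt, whereas the paper averages the quadratic form and then diagonalizes, and that you make the role of faithfulness in identifying $G$ with the Galois group explicit.
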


\begin{proof}

We can consider $G \leq {\rm{GL}}  (M)$.  Now, as the invariant ring $\Q [M]^G$ is finitely generated, there exists a family of  $n$-algebraic forms $\mathcal P = \{p_1, \ldots, p_r \}$  such that if  $g \in G$  then $g \cdot p_i = p_i$ for every $i = 1, \ldots, r$.  Now,  take any $g  \in  {\rm{GL}} (M)$ such that $g \cdot p_i = p_i$ for every $i = 1, \ldots, r$. In particular,  $g $ fixes every element in the field of fractions of the invariant ring, $\mathbb Q (M)^G$. Then, as  $G = Gal (\Q(M) /\Q (M)^G)$ we conclude that $g \in G$. So, we have proved that any element $g \in {\rm{GL}} (M)$ that fixes every $n$-algebraic form in $\mathcal P$,  is in fact in $G$. In other words, we obtain that $G = \Or (\mathcal P)$.

Consider now the following positive definite form $p_0 = w_1^2 + \ldots + w_n^2$, for an arbitrary basis of $M^\ast$, 
and transform it into a $G$-invariant positive definite $n$-form as follows:
 $\displaystyle q_0 = \sum_{{{g\in G} }} g \cdot p_0 $.  Since every $n$-form of degree $2$ over $\Q$ can be diagonalized, for a basis $ { \{v_1, \ldots, v_n\} \subset M^\ast} $ that we fix,   we can write $q_0 = \lambda_1 v_1^2 + \cdots + \lambda_n v_n^2$, where $\lambda_j > 0$ for all $j$ and, by construction, it is clear that $G = \Or (q_0, p_1, \ldots, p_r  )$.  Now, by using Lemma \ref{lem:cambiar_formas},  we transform $\{q_0, p_1, \cdots, p_r\}$ into
a  pre-realizable family $\mathcal Q = \{q_0, \ldots, q_{r+1}\}$  such that $G = \Or (\mathcal Q)$, but $q_0$ already verifies condition (\ref{lem:cambiar_formas:1}) from Definition \ref{def:prerealizable} so we conclude.
\end{proof}

\begin{example}\label{example:s_n}
For $\Sigma_n$  the symmetric group on a finite set of $n$ elements, let $M$ be the permutation $\Q \Sigma_n$-module with basis $\{m_1,\ldots,m_n\}$. Then, given a permutation $\sigma\in\Sigma_n$, the action of $\sigma$ on $M$ is given by $\sigma(m_i)=m_{\sigma(i)}$ and, if $x_i\in M^*$ denotes the dual of $m_i\in M$, the invariant ring $\Q[M]^{\Sigma_n}\subset\Q[x_1,\ldots,x_n]$ is the polynomial algebra generated by the elementary symmetric functions $e_i$, $i=1,\ldots,n$, defined by the formal equation
$$f(X)=\prod_{i=1}^n(X-x_i)=X^n+\sum_{i=1}^n (-1)^ie_i(x_1, \ldots, x_n)X^{n-i}$$
where $X$ is an indeterminate, \cite[Example of  p. 2]{benson}.

Now, following the lines of the proof of Theorem \ref{thm:orthogonal}, we obtain that  $\Sigma_n=\Or({\mathcal Q})\leq {\rm{GL}}(M)$ where  $\mathcal Q = \{q_0, \cdots, q_{n+1}\}$ is the following realizable family of $n$-algebraic forms
\begin{align*}
 q_0&= n!\sum_{i=1}^n x_i^2, \\
q_j&=  e_j q_{j-1} q_0, \;  \text{for} \; j=1,\ldots,n, \\
q_{n+1}&= q_0^s,  \;  \text{for} \;  s \geq \lceil \frac{ (n+4) (n+1)}{4}\rceil +1.
\end{align*}

\end{example}

\section{Kahn's problem for orthogonal groups}\label{Model}

In this section we realize orthogonal groups of a finite family $\mathcal Q = \{q_0, \ldots, q_r\} \subset \Q [M]$ of rational $n$-algebraic forms under the hypothesis that $q_0 =\lambda_1v_1^d + \ldots + \lambda_nv_n^d$,  for  some basis $\{v_1, \ldots, v_n\}$ of the dual $M^\ast$ and some $d > 1$, with $\lambda_i \neq 0$ for $i = 1, \ldots, n$. 

 We recall some results on rational homotopy theory and refer the reader to \cite{FHT}. If $W$ is a graded rational vector space, we write $\Lambda W$ for the free commutative graded algebra on $W$, which is a symmetric algebra on $W^{\text{even}}$  tensored with an exterior algebra on $W^{\text{odd}}$. A Sullivan algebra is a commutative differential graded algebra which is free as commutative graded algebra on a simply connected graded vector space $W$ of finite dimension in each degree. It is minimal if in addition $\di(W) \subset \Lambda^{\geq 2}W$.  The geometric realization funtor of Sullivan, establishes an isomorphism between finite-type minimal Sullivan algebras and simply-connected spaces, and an isomorphism between $W$ and $\pi_\ast (X)$ as graded modules.

\begin{definition}\label{defmodel}
Let $\mathcal Q = \{q_0, \ldots, q_{r+1}\} \subset \mathbb Q [v_1, \ldots, v_n]$ be a family of realizable $n$-algebraic forms, where $\{v_1, \ldots, v_n\}$ is a basis of $M^\ast$ and $ d = \deg (q_0) \geq 2$. Then,
for any $k$ integer satisfying $\deg (q_{r+1}) < 2k + (d-1)$,  we define the minimal Sullivan model
$$\M_{(\mathcal Q, k)}= \Big(\Lambda(x_1,x_2,y_1,y_2,y_3, z, v_j \mid  j = 1, \ldots, n), \di \Big)$$
 where degrees and differential are described by
 \begin{alignat*}{2}
&\vert x_1 \vert= 8, \qquad \qquad&\di(x_1)&=0\\
&\vert x_2 \vert= 10,& \di(x_2)&=0\\
&\vert y_1\vert = 33,& \di(y_1)&=x_1^3x_2\\
&\vert y_2\vert= 35,& \di(y_2)&=x_1^2x_2^2\\
&\vert y_3\vert = 37,& \di(y_3)&=x_1x_2^3\\
&\vert v_j \vert= 40,& \di(v_j)&=0\\
&\vert z \vert = 80k + 40 d -41,\qquad & \di(z)&=\displaystyle \sum_{i=1}^{r+1} q_i x_1^{10k + 5 (d-1)- 5\deg( q_i)} + q_0 (x_1^{10k-5} + x_2^{8k-4})\\
& & &+ x_1^{10k + 5 (d-4)} (y_1y_2 x_1^4x_2^2 - y_1y_3x_1^5x_2 + y_2y_3x_1^6)\\
& & & + x_1^{10k +5(d-1)} + x_2^{8k+4(d-1)}.
\end{alignat*}
\end{definition}

We can now prove one of our main results. For clarity purposes,  we include some technical lemmas at the end of this paper,  in Section \ref{lemma}.
\begin{theorem}\label{thmgroup}
Let  $ \mathcal M_{(\mathcal Q, k)}$ be the minimal Sullivan model from above.
Then,  the group of self homotopy equivalences
$\E (\mathcal M_{(\mathcal Q, k)})$ is isomorphic to the orthogonal group
$\Or (\mathcal Q)$.
\end{theorem}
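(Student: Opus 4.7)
The plan is to construct a group homomorphism $\Phi\colon \Or(\mathcal{Q}) \to \E(\M_{(\mathcal{Q},k)})$ directly and then show that every self-equivalence is in its image modulo homotopy, via a structural degree analysis. Given $A \in \Or(\mathcal{Q})$, let $\Phi(A)$ be the endomorphism of $\M_{(\mathcal{Q},k)}$ fixing each of $x_1, x_2, y_1, y_2, y_3, z$ and sending $v_j \mapsto \sum_i A_{ij} v_i$. The only nontrivial check that $\Phi(A)$ commutes with $\di$ is on $z$: every summand of $\di z$ without $v$-dependence is preserved by construction, while each $q_i$-summand is preserved precisely because $q_i \circ A = q_i$ for $A \in \Or(\mathcal{Q})$. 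Hence $\Phi(A)\in \E(\M_{(\mathcal{Q},k)})$ and $\Phi$ is evidently a group homomorphism.

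For the reverse direction, consider an arbitrary self-equivalence $f\colon \M_{(\mathcal{Q},k)} \to \M_{(\mathcal{Q},k)}$. An inventory of closed and exact elements at the relevant degrees yields tight constraints. The spaces in degrees $8$ and $10$ are one-dimensional (spanned by $x_1$ and $x_2$), so $f(x_1) = \alpha x_1$ and $f(x_2) = \beta x_2$ with $\alpha, \beta \in \Q^\times$; applying $\di$ to $f(y_i)$ forces $f(y_i) = \alpha^{4-i}\beta^{i} y_i$ for $i = 1, 2, 3$. In degree $40$ the closed space is $\langle v_1, \ldots, v_n, x_1^5, x_2^4\rangle$ (no $y$-products of that degree, and degree $39$ is trivial so no coboundaries), whence $f(v_j) = \sum_i A_{ij} v_i + \mu_j x_1^5 + \nu_j x_2^4$. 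Writing $f(z) = \gamma z + W$ with $W$ decomposable, we impose $f(\di z) = \gamma \di z + \di W$.

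The heart of the proof is to extract the orthogonality condition from this identity. The key observation is that the pure monomials $x_1^{10k+5(d-1)}$ and $x_2^{8k+4(d-1)}$ appearing in $\di z$ cannot arise from $\di W$: every nonzero contribution to $\di W$ comes from a product involving at least one $y_j$-factor, and the differential $\di y_j = x_1^{4-j}x_2^j$ always carries an $x_2$-factor (respectively an $x_1$-factor), so no pure-$x_1$ or pure-$x_2$ monomial can occur in $\di W$. Matching coefficients forces $\alpha^{10k+5(d-1)} = \beta^{8k+4(d-1)} = \gamma$. The closed ``Massey-type'' element $y_1y_2 x_1^4 x_2^2 - y_1y_3 x_1^5 x_2 + y_2y_3 x_1^6$ is an $f$-eigenvector with eigenvalue $\alpha^9\beta^5$; incorporating the outer $x_1^{10k+5(d-4)}$ prefactor yields an additional constraint which, combined with the previous one, gives $\beta^5 = \alpha^6$ and $\alpha^{2k+d-1} = 1$. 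In the case $d = 2$ arising from Theorem \ref{thm:orthogonal} the exponent $2k+1$ is odd, forcing $\alpha = 1$, hence $\beta = \gamma = 1$.

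With all scalars trivial, the residual equation
\[
\sum_{i=1}^{r+1}(f(q_i)-q_i)\,x_1^{10k+5(d-1)-5\deg(q_i)} + (f(q_0)-q_0)(x_1^{10k-5}+x_2^{8k-4}) = \di W
\]
combined with the same ``no pure-$x$ powers in $\di W$'' argument forces $f(q_i) = q_i$ for all $i$. Expanding $f(q_0) = q_0$ with $q_0 = \sum_j \lambda_j v_j^d$ and extracting the coefficient of $x_1^{5d}$ gives $\sum_j \lambda_j \mu_j^d = 0$; since $\lambda_j > 0$ (by Theorem \ref{thm:orthogonal}) and $d = 2$ is even, this forces $\mu_j = 0$ for all $j$, and analogously $\nu_j = 0$. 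Hence $f(v_j) = \sum_i A_{ij} v_i$ with $q_i \circ A = q_i$ for every $i$, i.e., $A \in \Or(\mathcal{Q})$, so $f$ is homotopic to $\Phi(A)$, producing the inverse of $\Phi$. \emph{The main obstacle} is the rigorous verification that $\di W$ contains no pure-$x_1$ or pure-$x_2$ monomials in the $v_j$'s; this depends on the specific architecture of $\di z$ and the gap condition $\deg(q_{i+1})-\deg(q_i) > 1$ of Definition \ref{def:prerealizable}, which ensures that different $q_i$-blocks do not mix under the $f$-action.
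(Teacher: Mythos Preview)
Your overall architecture is close to the paper's, but the order of operations breaks the argument. When you match the coefficient of the pure monomial $x_1^{10k+5(d-1)}$ in $f(\di z)=\gamma\,\di z+\di W$, you claim this yields $\alpha^{10k+5(d-1)}=\gamma$. It does not: since $f(v_j)=\sum_i A_{ij}v_i+\mu_jx_1^5+\nu_jx_2^4$, each summand $f(q_i)\,(\alpha x_1)^{10k+5(d-1)-5\deg(q_i)}$ also contributes a pure $x_1^{10k+5(d-1)}$ term, namely $q_i(\mu_1,\dots,\mu_n)\,\alpha^{10k+5(d-1)-5\deg(q_i)}$, and likewise $f(q_0)(\alpha x_1)^{10k-5}$ contributes $q_0(\mu)\,\alpha^{10k-5}$. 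The analogous pollution by the $\nu_j$'s occurs for the pure $x_2^{8k+4(d-1)}$ coefficient. Thus you cannot determine $\alpha,\beta,\gamma$ before disposing of $\mu_j,\nu_j$; the subsequent Massey--eigenvalue computation inherits the same defect.

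Even granting your scalar identities, your mechanism for killing $\mu_j$ and $\nu_j$ is not valid at the stated generality. You extract $\sum_j\lambda_j\mu_j^d=0$ and conclude $\mu_j=0$ using $\lambda_j>0$ and $d=2$; but Theorem~\ref{thmgroup} is stated for an arbitrary realizable family, where Definition~\ref{def:prerealizable} only requires $\lambda_j\neq 0$ and $d>1$. With mixed signs of $\lambda_j$ (or $d$ odd) the equation $\sum_j\lambda_j\mu_j^d=0$ has nontrivial solutions, so this step fails. The paper avoids both problems by establishing $a_2(j)=0$ and $a_1(j)=0$ \emph{first}: for $a_2(j)=0$ it isolates the terms linear in $A(v_j)$ with no $x_1$-factor and uses linear independence of $\{A(v_j)\}$; for $a_1(j)=0$ it runs a lexicographic argument on the monomials of $q_{r+1}=q_0^s$, which is where the condition $s\ge n$ in Definition~\ref{def:prerealizable}(\ref{lem:cambiar_formas:2}) is actually used. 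Only after $f(v_j)$ is known to lie in $M^\ast$ does the paper read off the scalar relations and obtain $a_1^{2k+1}=1$ over $\Q$. Your proof needs to reinstate this order (or supply an independent argument for $\mu_j=\nu_j=0$ that does not presuppose the scalar identities and does not appeal to positivity of the $\lambda_j$).
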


\begin{proof}
For any  $ g \in \Or (\mathcal Q) $, we can define
$f_g \in \Aut(\M_{(\mathcal Q, k)})$ given by:
$$\begin{alignedat}{2}
f_g(x_i) =&  x_i \\
f_g(y_i) =& y_i, \\
 f_g(v_j) =&  g \cdot v_j \\
  f_g(z) = &    z  \\
  \end{alignedat}$$
Hence $\Or (\mathcal Q)  \leq \Aut (\M_{(\mathcal Q, k)})$ and also $\Or (\mathcal Q)  \leq \E (\M_{(\mathcal Q, k)}).$ Let us now  consider an arbitrary $f \in \Aut (\M_{(\mathcal Q, k)})$. By degrees reasoning, we have:
\begin{equation}\label{description-f}
\begin{alignedat}{2}
f(x_1) =&  a_1x_1 \\
f(x_2) =& a_2x_2  \\
 f(y_i) =&  b_iy_i \\
f(v_j) = & A (v_j) + a_1 (j) x_1^5 + a_2 (j) x_2^4, \, \, \text{where} \, A \in {\rm{GL}}(M^\ast)\\
f(z) = & cz + y_1A_1 + y_2A_2 + y_3A_3 + y_1y_2y_3D, \,\,  \text{where}  \, A_1, A_2, D \in \mathbb Q[x_1, x_2, v_1  \ldots, v_n] \\
\end{alignedat}
\end{equation}

 Now, using that $f(\di y_i) = \di f(y_i) $, for $i=1, 2, 3$ we obtain:
\begin{equation}\label{coefficients}
\begin{alignedat}{2}
b_1 =&  a_1^3a_2 \\
b_2 =& a_1^2a_2^2 \\
b_3 = &a_1a_2^3\\
\end{alignedat}
\end{equation}

We now compare $f(\di z)$ to $\di f(z)$ which must be equal:
\begin{equation}\label{dz}
\begin{alignedat}{2}
f(\di(z))=&  \sum_{i=1}^{r+1} f(q_i) (a_1x_1)^{10k + 5(d-1) - 5\deg(q_i)} + f(q_0)  \Big((a_1x_1)^{10k-5} + (a_2x_2)^{8k-4} \Big)\\
&+  (a_1x_1)^{10 k + 5 (d-4)} \Big((b_1y_1)(b_2y_2) (a_1x_1)^4(a_2x_2)^2 - (b_1y_1)(b_3y_3)(a_1x_1)^5(a_2x_2) \\
&+ (b_2y_2)(b_3y_3)(a_1x_1)^6\Big) + (a_1x_1)^{10k +5 (d-1)} + (a_2x_2)^{8k+4(d-1)}\\
\end{alignedat}
\end{equation}

\begin{equation}\label{df}
\begin{alignedat}{2}
\di f(z)=&c \Big [\sum_{i=1}^{r+1} q_i x_1^{10k + 5 (d-1) - 5\deg( q_i)} + q_0 (x_1^{10k-5} + x_2^{8k-4})  \\
&+ x_1^{10 k + 5 (d-4)} (y_1y_2 x_1^4x_2^2 - y_1y_3x_1^5x_2 + y_2y_3x_1^6)
+ x_1^{10k +5(d-1)} + x_2^{8k+4(d-1)} \Big] \\
& +x_1^3x_2A_1 + x_1^2x_2^2A_2 + x_1x_2^3A_3 + x_1^3x_2y_2y_3D - x_1^2x_2^2y_1y_3D + x_1x_2^3y_1y_2D\\
\end{alignedat}
\end{equation}

We first observe that $D= 0$.  This is easily obtained since in (\ref{df}) there are  $x_1^3x_2y_2y_3D$ like terms  while in (\ref{dz}) no term in $y_2y_3$ contains $x_2$ as a factor.

We now claim that $a_2 (j) = 0,$ for $j = 1, \ldots,n$.
In (\ref{dz}) the term $f(q_0)(a_2x_2)^{8k-4}$ contains the expression
\begin{equation}\label{a2j}
\displaystyle \lambda_j \sum_{j= 1}^n  \binom{d}{1} A (v_j) (a_2(j)x_2)^{4(d-1)}(a_2 x_2)^{8k-4}.
\end{equation}
Using again that $f(\di (z))$ and $\di (f(z))$ must be equal and,  since in  (\ref{df})  the only possible summands in $A(v_j)x_2^{8k+4(d-2)}$ arise from those which contain $A_i, \, i=1,2, 3$, which also contain $x_1$ as a factor,  we deduce that the expression (\ref{a2j}) is zero. This directly implies that $$\displaystyle \sum_{j= 1}^n  \lambda_j d  a_2^{8k-4} a_2(j)^{4(d-1)}A (v_j) = 0.$$
Now, $\{A(v_j)\}_{j=1}^n$ are  linearly independent,  since $A \in {\rm{GL}} (M^\ast)$ and $\{v_j\}_{j=1}^n$ are  linearly independent.  Therefore
 $\lambda_j d a_2^{8k-4} a_2(j)^{4(d-1)} = 0$ for every $j = 1, \ldots, n$, and since $a_2^{8k-4} \in \Q^\ast$, $d>1$, and $\lambda_i \neq 0$, we get that $a_2(j) =0, j= 1, \ldots, n$.

Next we prove that $a_1(j)=0$, for $j = 1, \ldots, n$.
To that end, we consider  monomials in $\di(z)$ containing both $x_1$ with the least possible exponent,  and containing \emph{all} the variables $v_1, \ldots, v_n$. Observe that those monomials exclusively come from $q_{r+1} x_1^{10k +5(d-1) - 5 \deg (q_{r+1})}$, and  their existence is guaranteed since $\mathcal Q$ is a realizable family of forms which in particular implies that $q_{r+1} = q_0^s$ for an adequate $s$.   Now, we order them by lexicographic order, that is, $v_1^{d_1} \cdots v_n^{d_n} x_1^{10k +5(d-1)- 5 \deg (q_{r+1})}>v_1^{e_1} \cdots v_n^{e_n} x_1^{10k +5 (d-1)- 5 \deg (q_{r+1})}$  if $d_1 > e_1$ or if $d_1 = e_1$, $d_2 > e_2$,  and so on. We pick the highest of  those monomials in $\di(z)$ which, up to some non-zero coefficient,  has the form:
 $$v_1^{\alpha_1} \cdots v_n^{\alpha_n} x_1^{10k +5(d-1) - 5 \deg (q_{r+1})}$$
Therefore, there exists a monomial in (\ref{dz}),  up to some coefficient in the variables $A (v_i)$'s,
\begin{equation}\label{Avi}
\binom{\alpha_n}{1} a_1(n)A(v_1)^{\alpha_1} \cdots A(v_n)^{\alpha_n - 1} x_1^{10k +5d - 5 \deg (q_{r+1})}.
\end{equation}
Due to the exponent of $x_1$, the monomial (\ref{Avi}) does not appear in (\ref{df}) (recall that $\deg (q_i) - \deg (q_j) >1$ for every $i \neq j$). Therefore it must be either zero or it must be cancelled by other monomial in (\ref{dz}). Observe that it cannot be cancelled from monomials coming from $q_j$, with $j < r+1$,  as we mentioned above, so  the only possibility for it to be cancelled is to be the image of a monomial $v_1^{\alpha_1}\cdots v_{i_0}^{\alpha_{i_0+1}} \cdots v_n^{\alpha_n-1}x_1^{10k+5(d-1)-5 \deg( q_{r+1})}$.   Now, that monomial cannot appear in $q_{r+1}$ because it is strictly bigger than
$v_1^{\alpha_1} \cdots v_n^{\alpha_n} x_1^{10k +5 (d-1) - 5 \deg (q_{r+1})}$, which by hypothesis was the highest monomial in $q_{r+1}$ of that form. Therefore we conclude that (\ref{Avi}) is zero, and consequently $a_1(n) = 0$.  The same arguments, reordering the $v_i's$  successively, show that $a_1 (j) = 0$, for $j = 1, \dots, n-1$.

We have mentioned before that  every $A_i$-term  in (\ref{df}),  $i=1,2,3$,  has a $x_1x_2$ factor but it does not have a $y_1y_2$ factor, whereas in (\ref{dz}) every summand with a $x_1x_2$ factor has also a $y_1y_2$ factor. Therefore, $$0 =x_1^3x_2A_1 + x_1^2x_2^2A_2 + x_1x_2^3A_3 = \di(y_1A_1 + y_2A_2 + y_3A_3) $$ implying that $y_1A_1 + y_2A_2 + y_3A_3 = dm_z$ for some element $m_z$, by Proposition \ref{dmz}.

Observe that from these conclusions above,  the third line in  (\ref{df}) is zero. We compare now the second line from (\ref{df})  to the second and third lines in (\ref{dz}) and we get
\begin{equation}\label{c}
\begin{alignedat}{2}
c=& (a_1)^{10n-6}(a_2)^{2}b_1b_2\\
=&(a_1)^{10n-5}a_2b_1b_3\\
=&(a_1)^{10n-4}b_2b_3\\
=&(a_1)^{10n+5}\\
=&(a_2)^{8n+4}
\end{alignedat}
\end{equation}
and, using equations from (\ref{coefficients}), we also get that
$$c = a_1^{10n-1}a_2^{5}  =a_1^{10n+5} = a_2^{8n+4} ,$$
so that $a_1^{2n+1} = 1$, and therefore $a_1 = 1 = a_2 = b_1 = b_2 = b_3 = c =1 $.

We finally compare the first line of (\ref{dz}) to the first line of (\ref{df}). Since
the $n$-algebraic forms $\{q_0, \ldots, q_{r+1}\}$ verify  $2< \deg(q_0) < \deg (q_1)  < \ldots < \deg (q_{r+1}) $, it is straightforward that $f(q_i) = q_i $ for every $i= 0, \ldots, r+1$. Recall that the polynomial expression $q_i$  comes from the $n$-algebraic form $q_i \in \Q [M^\ast]$ and that $\M_{(\mathcal Q, k)}^{40} = M^\ast$. So as $f_{\mid M^\ast}  = A \in {\rm{GL}} (M^\ast) $, then $ q_i = A  (q_i) = q_i \circ A^t$ for every $i = 0, \ldots, r+1$, which implies that $A^{t} \in \Or (\mathcal Q)$. 

 Gathering altogether, we have proved that there exists one element $g= {(A^{t})}^{-1} \in \Or (\mathcal Q)$ such that $f = f_g + dm_z $. Since $f$ is homotopically equivalent to  $f_g$,  we conclude our proof.
\end{proof}

\begin{corollary}\label{diagonal}  Let $\Or (p_0, \ldots, p_r)  $ be the orthogonal group of a family of $n$-algebraic forms over an $n$-dimensional $\Q$-module,  such that $p_0$ is diagonalizable over $\mathbb Q$ with all the $\lambda_i \neq 0$ and $\deg (p_0) >1$. Then, $\Or (p_0, \ldots, p_r) $ can be realized by infinitely many non homotopically equivalent (rational) spaces.
\end{corollary}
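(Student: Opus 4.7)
The plan is that this corollary is essentially a formal consequence of Lemma \ref{lem:cambiar_formas} and Theorem \ref{thmgroup}, together with a parameter-counting observation for the infinitude part. The input data is the family $\mathcal P = \{p_0, p_1, \ldots, p_r\}$; the target is to exhibit a realizable family $\mathcal Q$ with $\Or(\mathcal Q) = \Or(\mathcal P)$ and then invoke Theorem \ref{thmgroup} in a parametrized fashion.

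The first step is to put $p_0$ into the diagonal form required by condition (\ref{lem:cambiar_formas:1}) of Definition \ref{def:prerealizable}. Since $p_0$ is diagonalizable over $\Q$ with every coefficient nonzero, and $d := \deg(p_0) \geq 2$ (as $\deg(p_0) > 1$ is an integer), I would choose the basis $\{v_1, \ldots, v_n\} \subset M^\ast$ witnessing that diagonalization, so that $p_0 = \lambda_1 v_1^d + \cdots + \lambda_n v_n^d$. Then I would apply Lemma \ref{lem:cambiar_formas} with $q_0 := p_0$ to obtain a pre-realizable family $\mathcal Q = \{q_0, q_1, \ldots, q_{r+1}\}$ satisfying $\Or(\mathcal Q) = \Or(\mathcal P)$. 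Because $q_0 = p_0$ already satisfies condition (\ref{lem:cambiar_formas:1}), the family $\mathcal Q$ is in fact realizable in the sense of Definition \ref{def:prerealizable}.

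Next, for every integer $k$ satisfying $2k + (d-1) > \deg(q_{r+1})$—a cofinite set of integers—I would form the minimal Sullivan model $\M_{(\mathcal Q,k)}$ of Definition \ref{defmodel}. Theorem \ref{thmgroup} gives
\[
\E(\M_{(\mathcal Q,k)}) \cong \Or(\mathcal Q) = \Or(p_0, p_1, \ldots, p_r),
\]
so the rational space $X_k$ realized by $\M_{(\mathcal Q,k)}$ is a solution of Kahn's problem for $\Or(p_0, \ldots, p_r)$.

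Finally, for the \emph{infinitely many non-homotopy-equivalent} part, I would observe that the generators $x_1, x_2, y_1, y_2, y_3, v_j$ of $\M_{(\mathcal Q,k)}$ all live in fixed degrees independent of $k$, while the single remaining generator $z$ has degree $80k + 40d - 41$, a strictly increasing function of $k$. Hence the graded vector space $\pi_\ast(X_k) \otimes \Q$ has a distinguished nonzero class in degree $80k + 40d - 41$ that is absent in degrees $80k' + 40d - 41$ for $k' \neq k$, so the spaces $\{X_k\}$ form an infinite family of pairwise non-homotopy-equivalent rational realizations. There is no substantive obstacle in the argument: all of the work has already been absorbed into Lemma \ref{lem:cambiar_formas} and Theorem \ref{thmgroup}, and the corollary is purely an assembly of those results plus a degree comparison.
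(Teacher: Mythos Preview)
Your proposal is correct and follows essentially the same route as the paper: diagonalize $p_0$, apply Lemma~\ref{lem:cambiar_formas} to upgrade $\{p_0,\ldots,p_r\}$ to a realizable family $\mathcal Q$ with the same orthogonal group, and then invoke Theorem~\ref{thmgroup} for all admissible $k$. Your justification that distinct values of $k$ yield non-homotopy-equivalent models via the degree $80k+40d-41$ of $z$ is exactly the content of the paper's ``it is clear'' sentence, only spelled out more carefully.
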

\begin{proof} By hypothesis $q_0 = \lambda_1v_1^d + \ldots + \lambda_nv_n^d$ for a given basis that we fix. By Lemma \ref{lem:cambiar_formas},  there exists a family of pre-realizable (realizable under the assumptions) $n$-algebraic forms  $ \mathcal Q = \{q_0, \ldots, q_{r+1}\}$  such that $ \Or (\mathcal Q) = \Or (p_0, \ldots, p_r) $.  Then,  by Theorem \ref{thmgroup} there exists, for every  integer $k$  s.t.\ $\deg (q_{r+1}) < 2k + (\deg q_0 -1)$,  a minimal Sullivan model $\mathcal M _{(\mathcal Q, k)}$ that realizes $\Or (\mathcal Q) $. It is clear that if $k $ is different from $k'$, then $\mathcal M _{(\mathcal Q, k)} $ and $\mathcal M _{(\mathcal Q, k')}$ are non homotopically equivalent.
\end{proof}

\section{Linear algebraic groups as orthogonal groups of algebraic forms}\label{sect:garibaldi}

This section is a first attempt to  give a complete answer to Kahn's realizability problem. Before that, we take the opportunity to recall what we have achieved in the previous sections. In Section \ref{invariant}  we proved that any finitely generated $\Q G$-faithful module $M$, provided that $G$ is finite, is the orthogonal group $\Or (q_0, \ldots, q_r) \leq {\rm{GL}} (M)$ of a realizable family of algebraic forms over $M$, Theorem \ref{thm:orthogonal}. Recall that realizable essentially means that $q_0$ is diagonalizable over $\mathbb Q$ with all the coefficients non zero. Within that framework, the  theorem above implies that realizing group actions is  equivalent to realizing orthogonal groups in the sense of  Kahn. Therefore, bringing back techniques from \cite{CV2}, in Section \ref{Model} we showed how to realize any orthogonal group  of a realizable family of rational algebraic forms,  Theorem \ref{thmgroup}.

This latter approach leads to an alternate  way of tackling Kahn's realizability problem that we would like to exploit. The first thing to be aware of is that our techniques involve finitely generated Sullivan algebras over $\mathbb Q$, which in particular implies that their group of self homotopy equivalences is a linear algebraic group  defined over $\Q$ \cite[Theorem 6.1]{Su}.  Therefore, the following question can be raised:
\begin{question} Is every linear algebraic group defined over $\mathbb Q$ the orthogonal group of a realizable family of rational algebraic forms?
\end{question}

We point out that most of the existing literature on linear algebraic groups is developed over $\C$ so we need to introduce some specific notation. Given an $n$-dimensional $\Q$-module $M$,  one refers to the group of matrices with entries in $\C$ as the $\C$-points of ${\rm{GL}} (M)$.  An \emph{algebraic linear group} $G$  \emph{over}  $\Q$ is determined by any subgroup of the $\C$-points of ${\rm{GL}}(M)$ defined by polynomial equations in the entries where the coefficients of the polynomials lie in $\Q$.
We denote by $M_{\C}=M\otimes_\Q \C$ the complexification of $M$. A family of rational algebraic forms $\mathcal Q = \{q_0, \ldots, q_r\}\subset \Q[M]$ gives rise to a family of complex algebraic forms (defined over $\Q$) that we also denote by ${\mathcal Q}\subset \C[M_\C]$; we write $\Or_\C (\mathcal Q) = \{ f \in {\rm{{\rm{GL}}} }(M_\C)  \mid q_i \circ f = q_i \; \text{for every}\;  i = 0, \ldots, r\}$ for the orthogonal group. Then, $\Or({\mathcal Q})$ equals $\Or_\C({\mathcal Q})(\Q)$, the group of rational points of $\Or_\C({\mathcal Q})$.

\medskip
The following argument shows that when we consider orthogonal groups of a family of realizable forms, we are indeed considering orthogonal representations in the classical sense (i.e.\ preserving a quadratic form).

\begin{lemma}\label{lem:quadrat}
Let $M$ be an $n$-dimensional $\Q$-module and let $\mathcal Q = \{q_0, \ldots, q_{r+1}\} \subset \mathbb Q [M]$ be a family of realizable algebraic $n$-forms with $ d = \deg (q_0) >1.$ If $\Or_\C({\mathcal Q})$ is infinite, then $d=2.$
\end{lemma}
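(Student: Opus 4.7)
The plan is to reduce the statement to a Lie algebra computation on the single form $q_0$, exploiting the fact that realizability forces $q_0 = \lambda_1 v_1^d + \cdots + \lambda_n v_n^d$ to have the Fermat-like shape required by condition (\ref{lem:cambiar_formas:1}) of Definition \ref{def:prerealizable}.

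First, since $q_0$ belongs to $\mathcal{Q}$, I have the inclusion $\Or_\C(\mathcal{Q}) \leq \Or_\C(q_0)$, so if the former is infinite, so is the latter. Because $\Or_\C(q_0)$ is a linear algebraic group over $\C$, being infinite is equivalent to being positive-dimensional, equivalently to having a nonzero Lie algebra (in characteristic zero, where the group scheme is automatically smooth and the tangent space at the identity coincides with the Lie algebra). Hence it suffices to prove that, for $d \geq 3$, the Lie algebra of $\Or_\C(q_0)$ is trivial.

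To compute that Lie algebra I would differentiate the defining condition $q_0 \circ f_t = q_0$ at $t = 0$ along any one-parameter subgroup $f_t$ with $\dot f_0 = A = (a_{ji})\in\mathfrak{gl}(M_\C)$. Using $\partial q_0/\partial v_j = d\,\lambda_j v_j^{d-1}$ and $(Av)_j = \sum_i a_{ji}v_i$, the chain rule yields the polynomial identity
$$\sum_{i,j} a_{ji}\, \lambda_j\, v_i\, v_j^{d-1} \,=\, 0 \quad \text{in } \C[v_1, \ldots, v_n].$$
The crucial observation is that, for $d \geq 3$, the monomials $v_i v_j^{d-1}$ (with $1 \leq i,j \leq n$) are pairwise distinct: when $i \neq j$ the exponent $d-1 \geq 2$ unambiguously identifies $j$ and the exponent $1$ identifies $i$, while the diagonal cases $i = j$ produce the separate pure powers $v_i^d$. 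Combined with $\lambda_j \neq 0$, the resulting linear independence forces every coefficient $a_{ji}$ to vanish, so $A = 0$.

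The main obstacle, as well as the heart of the argument, is precisely this monomial-distinctness step, and it is exactly where the value $d = 2$ becomes mandatory: when $d = 2$, the monomials $v_i v_j$ and $v_j v_i$ collide, so the identity collapses to the relations $a_{ii} = 0$ and $\lambda_j a_{ji} + \lambda_i a_{ij} = 0$, producing the $\binom{n}{2}$-dimensional classical orthogonal Lie algebra of $q_0$. This confirms that the conclusion $d = 2$ of the lemma is sharp: $d = 2$ is the unique degree compatible with an infinite orthogonal group once $q_0$ is of Fermat type.
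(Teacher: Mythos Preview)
Your argument is correct. Both you and the paper begin with the inclusion $\Or_\C(\mathcal Q)\leq \Or_\C(q_0)$ and then show that $\Or_\C(q_0)$ is finite when $d\geq 3$, but the routes diverge at that point. The paper observes that the partial derivatives $(q_0)_i=d\lambda_i v_i^{d-1}$ have the origin as their only common zero, so $q_0$ is nondegenerate in the sense of Orlik--Solomon, and then invokes \cite[Theorem~2.1]{Orlik-Solomon} to conclude that $\Or_\C(q_0)$ is finite. You instead carry out an explicit Lie-algebra computation, using the Fermat shape of $q_0$ to see that the monomials $v_iv_j^{d-1}$ are linearly independent once $d\geq 3$, which forces $\operatorname{Lie}\bigl(\Or_\C(q_0)\bigr)=0$ and hence finiteness. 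Your approach is self-contained and avoids the external reference, at the price of being tailored to the diagonal (Fermat-type) $q_0$; the paper's citation applies to any nondegenerate form of degree $>2$ and so is more general in principle, though only the diagonal case is needed here.
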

\begin{proof}
Assume $d>2$ and  let  $q_0 =\lambda_1v_1^d + \ldots + \lambda_nv_n^d$ for some basis $\{v_1,\ldots, v_n\}$ of the dual  $M^\ast$, where $\lambda_i\ne 0$ for $i=1,\ldots, n$. Let $(q_0)_i$ denote the $i$-th partial derivative of $q_0$, that is, $(q_0)_i=d\lambda_iv_i^{d-1}.$ Then $(0,\ldots,0)$ is the only common zero in $M_\C$ of the polynomials $(q_0)_1,\ldots,(q_0)_n,$ and therefore $q_0$ is nondegenerate in the sense of \cite[\S 1]{Orlik-Solomon}. Therefore, according to \cite[Theorem 2.1]{Orlik-Solomon} the group $\Or_\C(q_0)$ is finite (recall $d>2$). Now, since $\Or_\C({\mathcal Q})$ is infinite and $\Or_\C({\mathcal Q})\subset \Or_\C(q_0)$, we get a contradiction.
\end{proof}

\begin{remark}
In view of Theorem \ref{thm:orthogonal} and Lemma \ref{lem:quadrat}, given an $n$-dimensional $\Q$-module $M$ and a family of realizable $n$-algebraic forms $\mathcal Q = \{q_0, \ldots, q_{r+1}\} \subset \mathbb Q [M]$, there exists a family of realizable $n$-algebraic forms $\mathcal P = \{p_0, \ldots, p_{s+1}\} \subset \mathbb Q [M]$ such that $\deg (p_0) =2$ and $\Or_\C({\mathcal Q})=\Or_\C({\mathcal P})$ (and therefore $\Or({\mathcal Q})=\Or({\mathcal P})$).
\end{remark}

The question of whether it is possible to describe algebraic linear groups as orthogonal groups of algebraic forms has been addressed by Garibadi-Guralnick \cite{Garibaldi-Guralnick}  when the group is simple; see also \cite{Bermudez-Ruozzi}. The following result is a restatement of \cite[Theorem 6.6]{Garibaldi-Guralnick}, and we sketch here the proof to emphasize that the algebraic forms involved can be chosen to be defined over $\Q$.

\begin{theorem}\label{GG}
Let $G$ be a center free simple linear algebraic group defined over $\Q$ which is not $C_2$ type. Then there exist a $\Q$-module $M$ of dimension $n=\dim\big(\operatorname{Lie}(G)\big)$ and a family of realizable $n$-algebraic forms $\mathcal Q = \{q_0, q_1, q_2\} \subset \mathbb Q [M]$ such that the following hold
\begin{itemize}
\item[i)] $\deg (q_0)=2,$
\item[ii)] $G\subset \Or_\C({\mathcal Q})\subset {\rm{GL}}(M_\C)$, and this representation is equivalent to the adjoint representation $G\subset {\rm{GL}}(\operatorname{Lie}_\C(G)),$
\item[iii)] $[\Or_\C({\mathcal Q}) : G]$ is finite.
\end{itemize}
\end{theorem}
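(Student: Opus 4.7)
The plan is to realize $G$ inside $\operatorname{GL}(M)$ through its adjoint representation and to build all three forms of $\mathcal Q$ from $\operatorname{Ad}$-invariant polynomials on the Lie algebra. Set $M:=\operatorname{Lie}(G)$ viewed as an $n$-dimensional $\Q$-module via the $\Q$-structure on $G$, so $n=\dim\operatorname{Lie}(G)$, and embed $G$ into $\operatorname{GL}(M)$ via $\operatorname{Ad}\colon G\hookrightarrow\operatorname{GL}(M)$, which is injective because $G$ is center free. With every $q_i$ chosen in $\Q[M]^G$, the inclusion $G\subset\Or_\C(\mathcal Q)$ is built in by construction, and the equivalence demanded in (ii) is tautological since $M_\C\cong\operatorname{Lie}_\C(G)$ carries the adjoint action.

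For $q_0$ I would take a diagonalised version of the Killing form $\kappa(X,Y)=\operatorname{tr}\bigl(\operatorname{ad}(X)\operatorname{ad}(Y)\bigr)$, which lies in $\Q[M]$ and is nondegenerate since $G$ is simple. Every quadratic form over $\Q$ diagonalises over $\Q$ by completing the square, so after a $\Q$-linear change of basis of $M^\ast$ one writes $q_0=\lambda_1v_1^2+\cdots+\lambda_nv_n^2$ with each $\lambda_i\in\Q^\ast$. This yields condition \eqref{lem:cambiar_formas:1} of Definition \ref{def:prerealizable} with $d=2$ and gives $\operatorname{Ad}(G)\subset\Or_\C(q_0)$ by $\operatorname{Ad}$-invariance of $\kappa$.

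The substantive step is the second form. By \cite[Theorem 6.6]{Garibaldi-Guralnick}, for every simple type other than $C_2$ there exists a homogeneous $G$-invariant $\tilde q_1\in\C[M_\C]$ of degree at least $3$ (typically a trace polynomial $X\mapsto \operatorname{tr}\operatorname{ad}(X)^k$ for classical types, a Cartan cubic for exceptional types, and so on) such that $[\Or_\C(q_0,\tilde q_1):\operatorname{Ad}(G)]<\infty$; the excluded type $C_2$ is precisely the one in which the Killing form already has an unwanted quadratic companion invariant. To descend $\tilde q_1$ to $\Q$, I would invoke the standard Galois-descent identity $\C[M_\C]^G=\Q[M]^G\otimes_\Q\C$ available for reductive groups in characteristic zero, pick a $\Q$-basis of the finite-dimensional graded piece $\Q[M]^G_{\deg\tilde q_1}$, and extract a $\Q$-rational element in the $\operatorname{Gal}(\overline\Q/\Q)$-orbit of $\tilde q_1$ that has the same orthogonal group when extended to $\C$. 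If the resulting $q_1$ has degree $3$, I would multiply it by $q_0$ (and use unique factorisation in $\Q[M]$, exactly as in Lemma \ref{lem:cambiar_formas}) to raise its degree to at least $4$ without enlarging $\Or_\C(q_0,q_1)$, thereby fulfilling the gap $\deg(q_1)-\deg(q_0)>1$ in condition \eqref{lem:cambiar_formas:3}.

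Finally, I would set $q_2:=q_0^s$ for $s:=\max\bigl\{n,\lceil\deg(q_1)/2\rceil+1\bigr\}$, so that conditions \eqref{lem:cambiar_formas:2} and the remaining half of \eqref{lem:cambiar_formas:3} are automatic and $\Or_\C(\mathcal Q)=\Or_\C(q_0,q_1)$. Item (i) is visible from the choice of $q_0$, (ii) follows from the adjoint setup, and (iii) is the finite-index conclusion of Garibaldi--Guralnick. The hard part will not be the quadratic diagonalisation nor the cosmetic padding by $q_2$, but the $\Q$-rationality of $q_1$: one must verify, simple-type by simple-type outside $C_2$, that the Garibaldi--Guralnick invariant can actually be taken in $\Q[M]^G$ and not only in $\C[M_\C]^G$. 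For canonical choices this follows from uniqueness up to scalar of the invariant in its homogeneous piece, but a uniform argument requires a careful inspection of $\dim_\Q \Q[M]^G_{d_1}$ across the exceptional types.
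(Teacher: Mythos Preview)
Your outline coincides with the paper's proof in every structural respect: adjoint representation on $M=\operatorname{Lie}(G)$, $q_0$ the diagonalised Killing form, $q_1$ built from a higher fundamental invariant $f$ via multiplication by $q_0$ to force the degree gap, $q_2=q_0^s$, and the finite-index assertion (iii) read off from \cite[Theorem 6.6]{Garibaldi-Guralnick} through the chain $G\subset\Or_\C(\mathcal Q)\subset\Or_\C(f)$.

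The one point where you diverge, and where you yourself flag a gap, is the $\Q$-rationality of the higher invariant. Your suggested routes (abstract Galois descent $\C[M_\C]^G=\Q[M]^G\otimes_\Q\C$, uniqueness up to scalar in a fixed degree, or a type-by-type check) do not settle the issue uniformly: the identity $\C[M_\C]^G=\Q[M]^G\otimes_\Q\C$ only tells you that the relevant graded piece has a $\Q$-basis, not that some $\Q$-rational element of it has the same stabilizer as the Garibaldi--Guralnick invariant; and the graded piece is in general not one-dimensional (it may contain powers of $q_0$, for instance), so ``uniqueness up to scalar'' is not available across the board. The paper closes this gap in one stroke via Chevalley restriction: by \cite[Lemma 6.3]{Garibaldi-Guralnick} the Weyl-invariant ring $\Q[N]^W$ (with $N=\mathcal P^\vee\otimes\Q$) is polynomial on generators $p_1,\ldots,p_l$ defined over $\Q$, and the Chevalley isomorphism $\C[\operatorname{Lie}_\C(G)]^G\cong\C[N_\C]^W$ lets one pull these back to homogeneous $f_1,\ldots,f_l\in\Q[M]^G$. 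The invariant $f$ used in \cite[Theorem 6.6]{Garibaldi-Guralnick} is precisely one of these fundamental generators, hence $\Q$-rational by construction, with no case analysis needed. Replacing your descent argument by this Chevalley-restriction step makes the proof complete and uniform.
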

\begin{proof}
The proof is along the lines in \cite[Section 6]{Garibaldi-Guralnick}. Let $\mathcal R$ be a root system underlying $\operatorname{Lie}(G),$ and define $N={\mathcal P}^\vee\otimes \Q$ where $\mathcal P^\vee$ denotes the dual of the root lattice $\mathcal R^\vee$. If $W$ is the Weyl group of $G$, then $W$ acts on $N$ and $\Q[N]^W$ is a polynomial algebra with generators $p_1,\ldots,p_l$ such that the natural map $\Q[N]^W\otimes\C\to\C[N_\C]^W$ is an isomorphism where $\C[N_\C]^W$ is the polynomial algebra generated by the images of $p_1,\ldots,p_l$ \cite[Lemma 6.3]{Garibaldi-Guralnick}. Moreover, there is a natural map $\C[\operatorname{Lie}_\C(G)]^G\to \C[N_\C]^W$ which is an isomorphism, so we can choose $f_1,\ldots f_l$ homogeneous polynomials defined over $\Q$ by pulling back the polynomials $p_1,\ldots,p_l$ above, such that they generate $\C[\operatorname{Lie}_\C(G)]^G\to \C[N_\C]^W$. We define $M$ to be dual of the $\Q$-module spanned by the variables of $p_i$, so $M_\C=\operatorname{Lie}_\C(G)$ and $\C[M_\C]=\C[\operatorname{Lie}_\C(G)]$.

Let $f_1$ be (a multiple of) the quadratic Killing form on $\operatorname{Lie}_\C(G)$, and let $f$ be one of the higher degree invariants $f_2,\ldots,f_l$, which is not a power of $q_0$ (take $f=f_1$ if $G$ is of type $A_1$). Then, define $q_0=f_1$, $q_2=q_0f$ and $q_3$ an appropriate power of $q_0$ according to Definition \ref{def:prerealizable}.(3) so that $\mathcal Q = \{q_0, q_1, q_2\} \subset \mathbb Q [M]$ becomes a family of realizable algebraic forms. Now,  according to \cite[Theorem 6.6]{Garibaldi-Guralnick} (and the related discussion in case $G$ is $A_1$ type), $G$ is the connected component of $\Or_\C(f)$, that is, $[\Or_\C(f) : G]$ is finite. Therefore, since $G\subset\Or_\C({\mathcal Q})\subset \Or_\C(f)$, then $[\Or_\C({\mathcal Q}) : G]$ is finite too.
\end{proof}
\begin{remark}
Observe that, since the adjoint representation is faithful only if $G$ is center free,  that condition can not be avoided from the previous theorem.
\end{remark}

In general, it is not easy to provide an explicit description of the algebraic forms appearing in Theorem \ref{GG} (see \cite[Example 6.7]{Garibaldi-Guralnick} for  $G=E_8$). Nevertheless, it is possible to construct algebraic forms that fit in this setting:
\begin{example}\label{example:e8}
 Let $G=E_8$. Since $n=\dim\big(\operatorname{Lie}(E_8)\big)=248$,  we have to consider algebraic $248$-forms. Let $f_1$ be a $G$-invariant quadratic algebraic form defined over $\Q$ as in the proof of Theorem \ref{GG}, and let $f$ be the polynomial of degree $8$ described in \cite[Equation (2.3)]{CeP}.  Then $\mathcal Q = \{q_0, q_1, q_2\}$, where $q_0=f_1$, $q_1=q_0 f$ and $q_2=q_0^{248}$, is a realizable family of algebraic forms such that $\Or_\C({\mathcal Q})=E_8\times {\mathbb Z}/2$.

 Indeed, according to \cite[Theorem 3.1]{Garibaldi-Guralnick}, $\Or_\C(f)=E_8\times {\mathbb Z}/8$ where the factor ${\mathbb Z}/8$ is generated by diagonal matrices of the form $\omega {\mathbb I}$ with $\omega^8=1$. But since $f_1$ is a non trivial quadratic polynomial, a diagonal matrix $\omega {\mathbb I}$ stabilizes $f_1$, if and only if, $\omega^2=1$. Therefore $\Or_\C({\mathcal Q})=\Or_\C(f_1,f)=E_8\times {\mathbb Z}/2$, where the factor ${\mathbb Z}/2$ is generated by diagonal matrices of the form $\omega{\mathbb I}$ with $\omega^2=1$.
\end{example}

Combining Theorems \ref{thmgroup} and \ref{GG} we obtain the following.

\begin{corollary}\label{Kan_for_algebraic}
Let $G$ be a center free simple linear algebraic group defined over $\Q$ which is not $C_2$ type. Then there exists a rational space $X$ such that $G(\Q)$, the group of rational points of $G$, is a subgroup of $\E(X)$ and $[\E(X):G(\Q)]$ is finite.
\end{corollary}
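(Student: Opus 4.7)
The plan is to obtain Corollary \ref{Kan_for_algebraic} by chaining Theorem \ref{GG} with Theorem \ref{thmgroup} and then descending to $\Q$-points. First, I would apply Theorem \ref{GG} to the given $G$: since $G$ is center free, simple, defined over $\Q$, and not of type $C_2$, we get a $\Q$-module $M$ and a realizable family $\mathcal Q=\{q_0,q_1,q_2\}\subset\Q[M]$ with $\deg(q_0)=2$ such that $G\subset\Or_\C({\mathcal Q})\subset{\rm GL}(M_\C)$ and $[\Or_\C({\mathcal Q}):G]<\infty$. This is exactly the shape of data required to feed into Theorem \ref{thmgroup}.

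Next, I would restrict to rational points. Using the identification $\Or({\mathcal Q})=\Or_\C({\mathcal Q})(\Q)$ recorded at the beginning of Section \ref{sect:garibaldi}, the inclusion $G\subset\Or_\C({\mathcal Q})$ restricts to $G(\Q)\subset\Or({\mathcal Q})$. To check that this inclusion still has finite index, I would argue as follows: for algebraic $\Q$-groups $H\subset K$ with $[K:H]$ finite, the natural map
\[
K(\Q)/H(\Q)\longrightarrow K/H
\]
is injective, because any two elements $x,y\in K(\Q)$ lying in the same $H$-coset of $K$ satisfy $x^{-1}y\in H\cap K(\Q)=H(\Q)$. Applied to $H=G$ and $K=\Or_\C({\mathcal Q})$, this yields $[\Or({\mathcal Q}):G(\Q)]\leq[\Or_\C({\mathcal Q}):G]<\infty$.

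Finally, I would invoke Theorem \ref{thmgroup}. Since $\mathcal Q$ is realizable with $\deg(q_0)=2$, for any integer $k$ satisfying $\deg(q_2)<2k+1$ the minimal Sullivan algebra $\M_{({\mathcal Q},k)}$ is defined, and its group of self-homotopy equivalences is isomorphic to $\Or({\mathcal Q})$. Taking $X$ to be the rational space associated to $\M_{({\mathcal Q},k)}$ via Sullivan's geometric realization, we obtain $\E(X)\cong\Or({\mathcal Q})\supset G(\Q)$ with $[\E(X):G(\Q)]$ finite, as claimed. As a bonus, varying $k$ yields infinitely many pairwise non-equivalent such $X$, in analogy with Corollary \ref{diagonal}.

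The only genuinely delicate point is the descent from the finite index of $G$ inside the complex orthogonal group to the finite index of $G(\Q)$ inside $\Or({\mathcal Q})$; the rest of the argument is essentially bookkeeping, concatenating Theorem \ref{GG} and Theorem \ref{thmgroup}. Everything else, including the applicability of Theorem \ref{thmgroup} (which requires $q_0$ diagonal with nonzero coefficients, guaranteed by the construction in Theorem \ref{GG}) and the choice of admissible $k$, is straightforward.
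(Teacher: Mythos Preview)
Your proposal is correct and follows exactly the route the paper indicates: the paper's entire proof is the single sentence ``Combining Theorems \ref{thmgroup} and \ref{GG} we obtain the following,'' and your argument simply spells out that combination. The one point you take care with---passing from finite index of $G$ in $\Or_\C(\mathcal Q)$ to finite index of $G(\Q)$ in $\Or(\mathcal Q)$ via the injection $K(\Q)/H(\Q)\hookrightarrow K/H$---is left implicit in the paper, so your write-up is in fact more detailed than the original.
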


The following example illustrates that apart from the adjoint representation in Theorem \ref{GG}, other representations can be used to describe a simple linear algebraic group as the orthogonal group of a family of realizable algebraic forms, leading to refinements of Corollary \ref{Kan_for_algebraic}.

\begin{example}\label{example:g2}
Let $G=G_2$, and let $M=\Q^{\oplus 7}$. If we fix $\Q[M]=\Q[x_0, x_i,x'_i\colon i=1,2,3]$ then according to \cite[pag.\ 194]{aschbacher} $G_2=\Or_\C(f_0,f_1)$ where $f_0$ is the quadratic form $$f_0=-2x^2_0 +x_1x'_1+x_2x'_2+x_3x'_3,$$
and $f_1$ is Dickson alternating trilinear form
$$f_1=x_0x_1x'_1 + x_0x_2x'_2 + x_0x_3x'_3 + x_1x_2x_3 + x'_1x'_2x'_3.$$
Considering the change of variables $v_1=x_0$, and $v_{2i}=x_i+x'_i$, $v_{2i+1}=x_i-x'_i$ for $i=1,2,3$, we obtain in $\Q[M]=\Q[v_i\colon i=1,\ldots,7]$ the expressions
$$f_1=\frac{1}{4}\Big(v_1(v_2^2-v_3^2+v_4^2-v_5^2+v_6^2-v_7^2)+v_2v_5v_7+v_3v_4v_7+ v_3v_5v_6 + v_2v_4v_6\Big)$$ and $$f_0=-2v_1 +\frac{1}{4} v_2^{2} -\frac{1}{4} v_3^2 +\frac{1}{4}v_4^{2} -\frac{1}{4} v_5^{2} +\frac{1}{4} v_6^{2}-\frac{1}{4} v_7^{2},$$ that give us a family of realizable $7$-forms ${\mathcal Q}=\{q_0, q_1, q_2\}$, where $q_0=f_0$, $q_1=f_1$, and $q_2=f_0^7$, such that $G_2=\Or_\C({\mathcal Q})$. Therefore, according to Theorem \ref{thmgroup}, there exists a space $X$ such that $\E(X)=\Or({\mathcal Q})=G_2(\Q)$.
\end{example}

\section{Lemmas about elements}\label{lemma}

We collect here technical results on the algebraic structure of the Sullivan model $\M_{(\mathcal Q, k)}$ introduced in Definition \ref{defmodel}. They are needed in the proof of Theorem \ref{thmgroup}. 

\medskip

In what follows $P = \Q [x_1, x_2, v_j : j = 1, \ldots, n].$
\begin{lemma}{\label{lemma1}} Let $A_1 \in  P^{80k+40d -74}$, for $d>1, \, k>1$. Then $A_1 = x_1^2x_2^3B_1$ with $B_1 \in P$.
\end{lemma}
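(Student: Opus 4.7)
The plan is to argue monomial by monomial via a congruence analysis on the degrees. Since $P$ is a polynomial algebra in $x_1, x_2, v_1, \ldots, v_n$, it suffices to show that every monomial of total degree $80k + 40d - 74$ already contains $x_1^2 x_2^3$ as a factor; once this is established, one defines $B_1$ as the sum of the corresponding quotients and concludes $A_1 = x_1^2 x_2^3 B_1$.

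So I would take an arbitrary monomial
\[
x_1^{a} x_2^{b} v_1^{c_1} \cdots v_n^{c_n}, \qquad a,b,c_i \geq 0,
\]
and write the degree equation, using $|x_1|=8$, $|x_2|=10$, $|v_j|=40$:
\[
8a + 10b + 40(c_1+\cdots+c_n) \;=\; 80k + 40d - 74.
\]
The trick is to pick moduli that eliminate the contribution of the $v_j$'s (they only contribute multiples of $40$) while isolating $a$ or $b$ in turn. Reducing modulo $8$ kills the $8a$ and $40c$ terms and leaves $2b \equiv -74 \equiv 6 \pmod{8}$, whence $b \equiv 3 \pmod{4}$ and in particular $b \geq 3$. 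Reducing modulo $5$ kills the $10b$ and $40c$ terms and leaves $3a \equiv -74 \equiv 1 \pmod{5}$, whence $a \equiv 2 \pmod{5}$ and in particular $a \geq 2$.

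Thus every monomial of the prescribed degree is divisible by $x_1^2 x_2^3$, which yields the desired factorization. The hypotheses $d>1$ and $k>1$ are only used to guarantee that $80k+40d-74$ is large enough for monomials of this shape to actually exist (so the lemma is non-vacuous); they play no role in the divisibility argument itself. There is no serious obstacle here — the only mildly delicate point is spotting that the moduli $8$ and $5$ are the correct choices because both $\gcd(10,40)\cdot(\text{something})$ and $\gcd(8,40)\cdot(\text{something})$ let the $v_j$ contribution disappear, converting the divisibility statement into two elementary congruences.
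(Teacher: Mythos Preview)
Your proof is correct and follows essentially the same congruence-analysis approach as the paper: the paper reduces first modulo $10$ to obtain $a\equiv 2,7\pmod{10}$ (equivalently your $a\equiv 2\pmod 5$), factors out $x_1^2$, and then reduces modulo $8$ in the resulting degree to get $b\equiv 3,7\pmod 8$ (equivalently your $b\equiv 3\pmod 4$). The only cosmetic difference is that you run both congruences directly on the original degree equation rather than in two sequential steps, and you use modulus $5$ in place of $10$; the conclusions are identical.
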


\begin{proof}
Let $a,b,c $ be non negative integers such that $x_1^a x_2^bp \in P^{80k+40d -74}$ where $p \in \Q[v_j]$ is an algebraic form of degree $c$. Then $8a + 10b + 40 c = 80k+40d -74$ which implies $8a \equiv 6 \pmod {10}$ and hence, $a \equiv 2, 7 \pmod{10}$.
Then, since $a \geq 2$, $x_1^a x_2^bp  =x_1^2 (x_1^{\tilde a} x_2^bp)$ with $(x_1^{\tilde a} x_2^bp) \in P^{80k +40 d-90}$. Simplifying notation, we write $A_1 =x_1^2 \widetilde A_1$, where $\widetilde A_1 \in P^{80k +40 d-90}$.
Now, let $x_1^{\tilde a}x_2^bp \in P^{80k +40 d-90 } $. Then, $8\tilde a + 10 b + 40 c = 80k +40 d-90 $, which implies $2b  \equiv 6 \pmod{8}$ and hence $b \equiv 3, 7 \pmod{8}.$ In particular, $b \geq 3$ so $\widetilde A_1 = x_2^3B_1$, where $B_1 \in P^{80k+40d-120}.$
\end{proof}

\begin{lemma}{\label{lemma2}} Let $A_2 \in  P^{80k+40d-76}$, for $d>1, \, k>1$. Then $A_2 = x_1^3x_2^2B_2$ with $B_2 \in P$.
\end{lemma}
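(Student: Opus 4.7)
The plan is to mimic the proof of Lemma \ref{lemma1} step by step, replacing the target degree $80k+40d-74$ with $80k+40d-76$, and tracking how this shifts the mod $10$ and mod $8$ congruences so that the roles of $x_1$ and $x_2$ (roughly) interchange.

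First I would reduce to monomials: it suffices to show that every monomial $x_1^a x_2^b p$ in $P^{80k+40d-76}$, with $p\in\Q[v_j]$ an algebraic form of degree $c$, satisfies $a\geq 3$ and $b\geq 2$. The defining relation is $8a+10b+40c=80k+40d-76$.

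Reducing this equation modulo $10$ gives $8a\equiv 4\pmod{10}$. Running through the residues of $8a$ modulo $10$ (namely $0,8,6,4,2,0,8,6,4,2$ as $a=0,\ldots,9$), the only solutions are $a\equiv 3,8\pmod{10}$, so in particular $a\geq 3$. Hence $A_2=x_1^3\widetilde A_2$ with $\widetilde A_2\in P^{80k+40d-100}$.

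Next, for any monomial $x_1^{\tilde a}x_2^b p$ in $P^{80k+40d-100}$, the equation $8\tilde a+10 b+40c=80k+40d-100$ reduced modulo $8$ gives $2b\equiv 4\pmod{8}$, i.e.\ $b\equiv 2,6\pmod{8}$, so $b\geq 2$. Therefore $\widetilde A_2=x_2^2 B_2$ with $B_2\in P^{80k+40d-120}$, which yields $A_2=x_1^3 x_2^2 B_2$. There is no real obstacle here; the only thing to check carefully is the arithmetic of the residues, which is where a sign/offset slip would be easy to make.
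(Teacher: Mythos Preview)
Your proposal is correct and follows essentially the same argument as the paper's proof: both reduce to monomials, use the degree equation $8a+10b+40c=80k+40d-76$ modulo $10$ to force $a\equiv 3,8\pmod{10}$ (hence $a\geq 3$), then work in degree $80k+40d-100$ modulo $8$ to force $b\equiv 2,6\pmod{8}$ (hence $b\geq 2$), yielding $A_2=x_1^3x_2^2B_2$ with $B_2\in P^{80k+40d-120}$.
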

\begin{proof}
Let $a,b,c $ be non negative integers such that $x_1^a x_2^bp \in P^{80k+40d-76}$ where $p \in \Q[v_j]$ is an algebraic form of degree $c$. Then $8a + 10b + 40 c = 80k+40d-76$ which implies $8a \equiv 4 \pmod {10}$ and hence, $a \equiv 3, 8 \pmod{10}$.
Then, since $a \geq 3$, $x_1^a x_2^bp  =x_1^3 (x_1^{\tilde a} x_2^bp)$ with $(x_1^{\tilde a} x_2^bp) \in P^{80k+40d -100}$. Simplifying notation, we write $A_2 =x_1^3 \widetilde A_2$, where $\widetilde A_2 \in P^{80k+40d -100}$.
Now, let $x_1^{\tilde a}x_2^bp \in P^{80k + 40d -100} $. Then, $8\tilde a + 10 b + 40 c = 80k + 40d -100$, which implies $2b  \equiv 4 \pmod{8}$ and hence $b \equiv 2, 6\pmod{8}.$ In particular, $b \geq 2$ so $\widetilde A_2 = x_2^2B_2$, where $B_2 \in P^{80k+40d -120}.$
\end{proof}

\begin{lemma}{\label{lemma3}} Let $A_3 \in  P^{80k + 40d - 78}$, for $d>1, \, k>1$. Then $A_3 = x_1^4x_2B_3$ with $B_3 \in P$.
\end{lemma}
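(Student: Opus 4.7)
The plan is to imitate the proofs of Lemmas \ref{lemma1} and \ref{lemma2} line by line, adjusting only the arithmetic. Since $P$ is a polynomial algebra, it suffices to prove the claim on an arbitrary monomial basis element $x_1^a x_2^b p$ with $p \in \Q[v_j]$ an algebraic form of degree $c$; the factorization then extends to $A_3$ by linearity.

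First I would extract the exponent of $x_1$. The degree condition reads $8a + 10b + 40c = 80k + 40d - 78$. Reducing modulo $10$, this forces $8a \equiv 2 \pmod{10}$; running through residues shows $a \equiv 4 \pmod{5}$, i.e.\ $a \equiv 4, 9 \pmod{10}$. In particular $a \geq 4$, so $x_1^a x_2^b p = x_1^4(x_1^{\tilde a} x_2^b p)$ with $x_1^{\tilde a} x_2^b p \in P^{80k+40d-110}$. By linearity this yields $A_3 = x_1^4 \widetilde A_3$ with $\widetilde A_3 \in P^{80k+40d-110}$.

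Next I would extract an $x_2$ from $\widetilde A_3$ by the same procedure. For a monomial $x_1^{\tilde a} x_2^b p$ in $P^{80k+40d-110}$ one has $8\tilde a + 10b + 40c = 80k + 40d - 110$; reducing modulo $8$ gives $2b \equiv 2 \pmod{8}$, so $b \equiv 1 \pmod{4}$, which means $b \equiv 1, 5 \pmod{8}$. In particular $b \geq 1$, hence $\widetilde A_3 = x_2 B_3$ for some $B_3 \in P^{80k+40d-120}$, giving the desired factorization $A_3 = x_1^4 x_2 B_3$.

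No obstacle is expected: the argument is purely a numerical check on exponents, strictly parallel to the two preceding lemmas, with the only substantive difference being the shift by $-78$ (rather than $-74$ or $-76$) in the target degree. The hypothesis $d > 1$, $k > 1$ is used only to guarantee that the target degrees $80k + 40d - 78$, $80k + 40d - 110$, $80k + 40d - 120$ are nonnegative, so that the extraction of factors $x_1^4$ and $x_2$ does not push us into negative degrees.
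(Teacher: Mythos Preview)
Your proposal is correct and essentially identical to the paper's own proof: the paper also reduces $8a+10b+40c=80k+40d-78$ modulo $10$ to get $a\equiv 4,9\pmod{10}$, factors out $x_1^4$, then reduces modulo $8$ to get $b\equiv 1,5\pmod{8}$ and factors out $x_2$, landing in $P^{80k+40d-120}$. The only addition in your write-up is the closing remark about the role of the hypotheses $d>1$, $k>1$, which the paper leaves implicit.
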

\begin{proof}
Let $a,b,c $ be non negative integers such that $x_1^a x_2^bp \in P^{80k + 40d - 78}$ where $p \in \Q[v_j]$ is an algebraic form of degree $c$. Then $8a + 10b + 40 c = 80k + 40d - 78$ which implies $8a \equiv 2 \pmod {10}$ and hence, $a \equiv 4, 9 \pmod{10}$.
Then, since $a \geq 4$, $x_1^4 x_2^bp  =x_1^4 (x_1^{\tilde a} x_2^bp)$ with $(x_1^{\tilde a} x_2^bp) \in P^{80k+40d -110}$. Simplifying notation, we write $A_3 =x_1^4 \widetilde A_3$, where $\widetilde A_3 \in P^{80k+40d -110}$.
Now, let $x_1^{\tilde a}x_2^bp \in P^{80k+40d -110} $. Then, $8\tilde a + 10 b + 40 c = 80k+40d -110$, which implies $2b  \equiv 2 \pmod{8}$ and hence $b \equiv 1, 5 \pmod{8}.$ In particular, $b \geq 1$ so $\widetilde A_3 = x_2B_3$, where $B_3 \in P^{80k+40d-120}.$
\end{proof}

\begin{proposition}\label{dmz} Let $y_1A_1 + y_2A_2 + y_3A_3 \in \mathcal M^{80k+40d -41}$ such that $A_i \in P$ and $\di(y_1A_1 + y_2A_2 + y_3A_3) = 0$. Then, there exists one element $m \in \mathcal M^{80k+40d-42}$ such that $y_1A_1 + y_2A_2 + y_3A_3 = \di (m).$
\end{proposition}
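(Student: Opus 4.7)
The plan is to combine the degree-restrictions provided by Lemmas \ref{lemma1}, \ref{lemma2}, \ref{lemma3} with an explicit ansatz for a primitive built out of the quadratic monomials $y_1y_2$, $y_1y_3$, $y_2y_3$.

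First, by the three preceding lemmas, the degrees of $A_1$, $A_2$, $A_3$ force
\[
A_1 = x_1^2 x_2^3 B_1,\qquad A_2 = x_1^3 x_2^2 B_2,\qquad A_3 = x_1^4 x_2 B_3,
\]
with $B_1,B_2,B_3\in P$ all of the same degree $80k+40d-120$. Applying the differential term-wise (using $\di y_1=x_1^3x_2$, $\di y_2=x_1^2x_2^2$, $\di y_3=x_1x_2^3$), I would compute
\[
\di(y_1A_1+y_2A_2+y_3A_3)=x_1^5x_2^4(B_1+B_2+B_3),
\]
and since $P$ is a polynomial ring the cocycle hypothesis yields the key relation $B_1+B_2+B_3=0$.

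Next, I would propose the primitive
\[
m \;=\; x_2 B_2\,y_1y_2 \;+\; x_1 B_3\,y_1y_3,
\]
which lies in $\mathcal M^{80k+40d-42}$ (a routine degree check using $|y_iy_j|$ and $|B_i|=80k+40d-120$). Recalling the Leibniz rule for odd-degree generators,
\[
\di(y_1y_2)=x_1^3x_2\,y_2-x_1^2x_2^2\,y_1,\qquad \di(y_1y_3)=x_1^3x_2\,y_3-x_1x_2^3\,y_1,
\]
I would expand $\di(m)$, collect the coefficients of $y_1$, $y_2$, $y_3$, and obtain
\[
\di(m) \;=\; -x_1^2x_2^3(B_2+B_3)\,y_1 \;+\; x_1^3x_2^2 B_2\,y_2 \;+\; x_1^4x_2 B_3\,y_3.
\]
Finally, invoking $B_2+B_3=-B_1$ rewrites the $y_1$-coefficient as $x_1^2x_2^3 B_1=A_1$, so $\di(m)=y_1A_1+y_2A_2+y_3A_3$ as desired.

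The only potentially delicate step is the choice of ansatz: one could symmetrize across all three $y_iy_j$'s, but setting the coefficient of $y_2y_3$ to zero is cleanest and is forced to work precisely because the cocycle condition collapses to $B_1+B_2+B_3=0$. Everything else is bookkeeping with Leibniz signs and degrees, already controlled by Lemmas \ref{lemma1}--\ref{lemma3}.
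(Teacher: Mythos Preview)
Your proof is correct and follows essentially the same approach as the paper's: both invoke Lemmas \ref{lemma1}--\ref{lemma3} to obtain the factorizations $A_i = x_1^{5-i}x_2^{4-i}B_i$, deduce $B_1+B_2+B_3=0$ from the cocycle condition, and then exhibit exactly the same primitive $m = x_2B_2\,y_1y_2 + x_1B_3\,y_1y_3$. The only cosmetic difference is that the paper substitutes $B_1=-B_2-B_3$ first and then recognizes the expression as $\di(m)$, whereas you write down $m$ and compute $\di(m)$ directly.
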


\begin{proof}
Using Lemma \ref{lemma1}, Lemma \ref{lemma2} and Lemma \ref{lemma3}, we know that $A_1 = x_1^2x_2^3B_1$, $A_2 = x_1^3x_2^2B_2$ and $A_3 = x_1^4x_2B_3$.  A straithforward computation shows that
 $ \di (y_1A_1 + y_2A_2 + y_3A_3) = x_1^5x_2^4 (B_1 + B_2 + B_3). $ As by hypothesis, $\di (y_1A_1 + y_2A_2 + y_3A_3)  = 0$,
we obtain that $ B_1 + B_2 + B_3 = 0$.

Now

\begin{equation}\label{dzz}
\begin{alignedat}{2}
y_1A_1 + y_2A_2 + y_3A_3&=\\
&= y_1(x_1^2x_2^3(-B_2-B_3))+y_2(x_1^3x_2^2B_2)+y_3(x_1^4x_2B_3)  \\
&=  x_1^3x_2y_2(x_2B_2) - y_1x_1^2x_2^2(x_2B_2) +x_1^3x_2y_3(x_1B_3) - y_1x_1x_2^3(x_1B_3)  \\
& =  \di(y_1y_2(x_2B_2)) + \di(y_1y_3(x_1B_3))\\
\end{alignedat}
\end{equation}

Hence $m = y_1y_2(x_2B_2) + y_1y_3(x_1B_3)$

\end{proof}

\noindent{{\bf{Acknowledgements.}} The authors would like to express their gratitude to Professors Skip Garibaldi, Bob Guralnick and Hiroo Shiga for helpful comments and their interest on the subject.


\end{document}